\theoremstyle{definition}
\newtheorem{definition}{Definition}
\newtheorem{theorem}[definition]{Theorem}
\newtheorem{corollary}[definition]{Corollary}
\newtheorem{lemma}[definition]{Lemma}
\newtheorem{proposition}[definition]{Proposition}
\theoremstyle{definition}
\newtheorem*{problem*}{Problem}
\newcommand{\dist}{\operatorname{dist}}
\begin{document}

\title{Decomposition of cubic graphs\\with cyclic connectivity 5}

\author{
	Edita Máčajová, Jozef Rajník
	\\[3mm]
	\\{\tt \{macajova, rajnik\}@dcs.fmph.uniba.sk}
	\\[5mm]
	Comenius University, Mlynská dolina, 842 48 Bratislava\\
}

\maketitle

\begin{abstract}
Let $G$ be a cyclically $5$-connected cubic graph with a $5$-edge-cut separating $G$ into two cyclic components $G_1$ and $G_2$. We prove that each component $G_i$ can be completed to a cyclically $5$-connected cubic graph by adding three vertices, unless $G_i$ is a cycle of length five.
Our work extends similar results by Andersen et al. for cyclic connectivity $4$ from 1988.

\textbf{Keywords} --- cubic graphs, decomposition, cyclic connectivity, girth	
\end{abstract}

\section{Introduciton}

The study of cubic graphs offers a convenient approach to several widely-open conjectures such as the Tutte's $5$-flow conjecture, the cycle double cover conjecture, or the Berge-Fulkerson conjecture. It is known that minimal counterexamples to aforementioned conjectures are cubic graphs. Moreover, various other requirements that a minimal counterexample has to satisfy have been studied. Perhaps the most notorious requirement is that any cubic counterexample should not be $3$-edge-colourable, or in other words, that it is a \emph{snark}.

In the study of minimal counterexamples, connectivity plays a~crucial role. Since the connectivity of a cubic graph cannot exceed three, it is advantageous to use a refined measure---the \emph{cyclic connectivity}, that is, the minimum number of edges needed to separate two cycles in a given cubic graph.
It has been proven that the minimal counterexample to the $5$-flow conjecture is cyclically $6$-connected \cite{Kochol04-5f-cc6}, cyclically $4$-connected for the cycle double cover conjecture \cite{Zhang} and cyclically $5$-connected for the Berge conjecture \cite{Macajova20-bf-cc5}. On the other hand, Jaeger and Swart conjectured that there are no cyclically $7$-connected snarks \cite{Jaeger}.

Cubic graphs with small edge cuts enable us to use inductive arguments. If $G$ is a graph from some class $\mathcal{C}$ with a small cycle-separating cut, it is useful, if possible, to decompose $G$ along the cut into two smaller graphs contained in $\mathcal{C}$. Andersen et al. \cite{Andersen88} established such results for the class of cyclically $4$-connected cubic graphs. They showed that each \emph{cyclic part}, that is, a component separated by a cycle-separating cut of minimal size, of a cyclically $4$-connected cubic graph can be extended to a cyclically $4$-connected cubic graph by adding a pair of adjacent vertices and restoring $3$-regularity. Moreover, they characterised graphs where it is sufficient to add only two additional edges. Using this result they proved a lower bound on the number of removable edges in a cyclically $4$-connected cubic graph \cite{Andersen88}. Later, Goedgebeur et al. constructed and classified all snarks with cyclic connectivity $4$ and oddness $4$ up to order $44$ \cite{Goedgebeur, Goedgebeur-oddness-all}.

In this paper, we examine how a cyclic part $H$ of a cubic graph with cyclic connectivity $5$ can be completed to a cyclically $5$-connected cubic graph. We show that, except for the case where $H$ is a cycle of length 5, it is sufficient to add to $H$ three vertices on a path of length two and restore $3$-regularity to obtain a cyclically $5$-connected cubic graph. In Section \ref{sec:prelm} we summarise notions and a result concerning cyclic connectivity that we shall use. Then, in Section \ref{sec:lemmas}, we establish a weaker result that we can complete $H$ to a cubic graph with girth at least $5$. We prove our main result in Section \ref{sec:main} by showing that if we complete $H$ to a cubic graph with girth at lest $5$ which is not cyclically $5$-connected, we can use the structure of $H$ to find another completion which yields a cyclically $5$-connected cubic graph.

\section{Preliminaries}
\label{sec:prelm}

We start with some basic definitions and notations. All considered graphs are cubic and may contain loops and parallel edges, although their existence will be often excluded by additional requirements. We denote the subgraph of a graph $G$ induced by a set of vertices $X$ by $G[X]$. The set of edges of the graph $G$ that have one end in $X$ and the other in $V(G) - X$ is denoted by $\delta_G(G[X])$, or $\delta_G(X)$. We omit the subscript $G$ whenever the graph is clear from the context. Also, we will write only $\deg_X(v)$ instead of $\deg_{G[X]}(v)$ to denote the degree of vertex $v$ in the induced subgraph $G[X]$.

An \emph{edge-cut} of a connected graph $G$, or a \emph{cut} for short, is any set $S$ of edges of $G$ such that $G - S$ is disconnected. An edge-cut is \emph{cycle-separating} if at least two components of $G - S$ contain a cycle. We say that a connected graph $G$ is \emph{cyclically $k$-edge-connected} if it contains no cycle-separating edge-cut consisting of fewer than $k$ edges. The \emph{cyclic edge-connectivity} of $G$, denoted by $\zeta(G)$, is the largest number $k \le \beta(G)$, where $\beta(G) = |E(G)| - |V(G)| + 1$ is the cycle rank of $G$, for which $G$ is cyclically $k$-edge-connected (cf. \cite{Nedela-Atoms}, \cite{Robertson}).

The cyclic edge-connectivity is bounded by the cycle rank, because there are graphs, where any two cycles share an edge. Such graphs are cyclically $k$-connected for every positive integer $k$. Among the simple cubic graphs, the only such examples are $K_4$ and $K_{3,3}$ for which we have $\zeta(K_4) = 3$ and $\zeta(K_{3,3}) = 4$. The cyclic connectivity of every graph $G$ is bounded from above by the \emph{girth} of the graph $G$, denoted by $g(G)$, which is the length of a shortest cycle in $G$ \cite{Nedela-Atoms, Robertson}.

One can easily check that for a cubic graph $G$ with $\zeta(G) \le 3$, the value $\zeta(G)$ is equal to the usual vertex-connectivity and edge-connectivity of $G$. Furthermore, cyclic edge-connectivity and cyclic vertex-connectivity, which is defined in a similar manner, of every cubic graph coincide. Therefore, we shall only use terms \emph{cyclically $k$-connected} and \emph{cyclic connectivity} instead of cyclically $k$-edge-connected and cyclic edge-connectivity.

Let us consider a cycle-separating edge-cut $S$ of minimum size. One can clearly see that $S$ consists of independent edges. Moreover, $G - S$ has exactly two components called \emph{cyclic parts} or \emph{fragments}. The following proposition \cite[Proposition 4]{Nedela-Atoms} of Nedela and Škoviera says that each cyclic part of a cyclically $5$-connected graph is $2$-connected.

\begin{proposition}
\label{prop:fragments}
Let $G$ be a connected cubic graph. Then each cyclic part of $G$ is connected. Moreover, if $\zeta(G)$ > 3, then each cyclic part is $2$-connected.
\end{proposition}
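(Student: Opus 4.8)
The plan is to establish both claims by a direct counting argument using cyclic connectivity. Let $S$ be a cycle-separating edge-cut of minimum size in a connected cubic graph $G$, and let $H$ be one of the cyclic parts, so that $G-S$ has components whose one side is $H$; by minimality $S$ consists of independent edges, so each vertex of $H$ meets at most one edge of $S$ and $|\delta_G(H)| = |S| = \zeta(G)$.

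For the first statement I would argue by contradiction. Suppose $H$ is disconnected, say $H = H' \cup H''$ with $H', H''$ nonempty. Since $H$ contains a cycle (it is a cyclic part), at least one of $H', H''$, say $H'$, contains a cycle. Now consider the other cyclic part $K$ of $G$, which also contains a cycle, and form the set of edges $\delta_G(H')$. All edges leaving $H'$ either go to $H''$ or lie in $S$; in particular $\delta_G(H')$ is a proper subset of $\delta_G(H) \cup \delta_G(H'')$ obtained by removing the edges from $H$ to $K$ that are incident with $H''$. The key point is that $\delta_G(H')$ separates the cycle inside $H'$ from the cycle inside $K$, hence is a cycle-separating cut; but it is strictly smaller than $|S|$ once $H''$ receives at least one edge of $S$ — and if $H''$ receives no edge of $S$ then $\delta_G(H'')$ consists only of edges to $H'$, making $\delta_G(H'') \subsetneq S$ a cut that separates the cycle in $K$ from anything of positive cycle rank on the $H''$-side, again contradicting minimality (or, if $H''$ has no cycle, one still gets a smaller cycle-separating cut $\delta_G(H\setminus H'')=\delta_G(H')$). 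Either way we contradict that $S$ has minimum size, so $H$ is connected.

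For the second statement, assume $\zeta(G) > 3$ and suppose some cyclic part $H$ is not $2$-connected. Then $H$ has a cut-vertex $v$, so $H - v$ splits into components; group them as $A$ and $B$ with $H[A\cup\{v\}]$ and $H[B\cup\{v\}]$ sharing only $v$. Since $v$ is cubic in $G$ and meets at most one edge of $S$, its three edges are distributed between the $A$-side, the $B$-side, and possibly $S$; the degree of $v$ in $H$ is $2$ or $3$. The crux is to show one of the two "sides" $A\cup\{v\}$ or $B\cup\{v\}$ yields a cycle-separating cut of size at most $3$. Counting edges leaving $A\cup\{v\}$: these are the one or two edges from $v$ toward $B$, together with the edges of $S$ incident with $A$; a short parity/degree computation (every vertex of $A$ other than $v$ is cubic and internal to $A$ except for its at-most-one $S$-edge) shows this cut has size at most $|S| - (\text{edges of }S\text{ on the }B\text{-side}) + (\text{edges }v\!-\!B) \le$ something bounded by $3$ provided $H$ is genuinely $2$-connected-free; one then checks that both the $A$-side (with $v$) and the other cyclic part of $G$ contain cycles, so the cut is cycle-separating, contradicting $\zeta(G)>3$.

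The main obstacle is the bookkeeping in the second part: one must carefully track how the (at most one) edge of $S$ at each vertex, together with the one-or-two edges of $v$ crossing the cut in $H$, combine so that the resulting cut around a side of the cut-vertex has size $\le 3$, and simultaneously verify the cycle-separating condition — i.e. that the small side actually contains a cycle rather than being a single path or a subdivided edge. Handling the degenerate configurations (a side of $v$ being acyclic, or $v$ being incident with an edge of $S$) cleanly, so that every branch produces either a $\le 3$ cycle-separating cut or an immediate contradiction with Proposition-style minimality of $S$, is where the real care is needed; the cycle-rank bound $\zeta(G)\le\beta(G)$ and the fact that cyclic parts contain cycles are the tools that close each branch.
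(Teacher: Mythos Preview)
The paper does not prove this proposition; it is quoted from Nedela and \v{S}koviera (reference \cite{Nedela-Atoms}, their Proposition~4). So there is no in-paper proof to compare against. Your overall strategy---exploit minimality of $S$ to show that any failure of ($2$-)connectedness produces a strictly smaller cycle-separating cut---is the standard one and almost certainly matches the source.

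That said, what you have written is a plan rather than a proof, and the second half contains a genuine gap. You announce that ``a short parity/degree computation \dots shows this cut has size at most \dots something bounded by $3$'' and then never perform the computation, and the inequality you are aiming for is not obviously the right one. A clean way to close the argument is to pass from a cut-vertex to a \emph{bridge} of $H$ (in a subcubic graph every cut-vertex is incident with a bridge: one component of $H-v$ receives exactly one edge from $v$). If $e$ is a bridge of $H$ with sides $P$ and $Q$, then $|\delta_G(P)|+|\delta_G(Q)| = |S|+2$, since $e$ is the only edge counted twice. One side, say $P$, contains a cycle of $H$; since the other cyclic part $K$ of $G$ also contains a cycle, $\delta_G(P)$ is cycle-separating, so $|\delta_G(P)|\ge |S|$ and hence $|\delta_G(Q)|\le 2$. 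But a nonempty connected induced subgraph $Q$ of a cubic graph with $|\delta_G(Q)|\le 2$ is necessarily cyclic (the acyclic case would force $|\delta_G(Q)|=|Q|+2\ge 3$ by Lemma~\ref{lemma:acyclic-pole}), so $\delta_G(Q)$ is a cycle-separating cut of size at most $2<\zeta(G)$, a contradiction. Your $A\cup\{v\}$, $B\cup\{v\}$ bookkeeping can be made to work along the same lines, but as written the phrase ``$\le$ something bounded by $3$ provided $H$ is genuinely $2$-connected-free'' is not an argument.

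For the first part, your reasoning is also muddled: if $H'$ and $H''$ are distinct components of $H$ there are \emph{no} edges between them, so $\delta_G(H')\subseteq S$ directly, and the clause ``$\delta_G(H'')$ consists only of edges to $H'$'' cannot occur. The correct one-line observation is that $\delta_G(H')\subsetneq S$ (strict, because $H''$ is nonempty and $G$ is connected, so some edge of $S$ meets $H''$) and $\delta_G(H')$ already separates the cycle in $H'$ from the cycle in $K$, contradicting minimality of $S$.
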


If $H$ is a non-empty induced subgraph of a cyclically $5$-connected cubic graph $G$, then it is either cyclic, and thus $|\delta_G(H)| \ge 5$, or $H$ is acyclic. In the latter case the relation between the number $|\delta_G(H)|$ and the number of vertices of $H$ is determined by following lemma, which can be proven by induction. Since $H$ is non-empty, we get bound on $|\delta_G(H)|$.

\begin{lemma}
\label{lemma:acyclic-pole}
Let $M$ be a connected acyclic induced subgraph of a cubic graph $G$. Then $|\delta_G(M)| = |V(M)| + 2$.
\end{lemma}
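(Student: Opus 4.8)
The plan is to argue by induction on the number $n = |V(M)|$ of vertices of $M$. The key observation, used throughout, is that a connected acyclic graph is a tree; in particular, if $n \ge 2$ then $M$ has a leaf. For the base case $n = 1$, the subgraph $M$ consists of a single vertex $v$, and since $G$ is cubic all three edges incident with $v$ lie in $\delta_G(M)$, so $|\delta_G(M)| = 3 = 1 + 2$, as claimed.

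For the inductive step, assume $n \ge 2$ and that the statement holds for all connected acyclic induced subgraphs on fewer than $n$ vertices. Since $M$ is a tree on at least two vertices, it has a leaf $v$ whose unique neighbour in $M$ we call $u$. Put $M' = M - v$; then $M'$ is again a tree, now on $n - 1$ vertices, and it is an induced subgraph of $G$ because $M$ is. By the induction hypothesis, $|\delta_G(M')| = (n - 1) + 2 = n + 1$. It remains to compare $\delta_G(M)$ with $\delta_G(M')$. As $v$ has degree one in the induced subgraph $M$, its two other edges in $G$, say $e_1$ and $e_2$, join $v$ to $V(G) \setminus V(M)$; these belong to $\delta_G(M)$ but, having no end in $M'$, do not lie in $\delta_G(M')$. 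On the other hand, the edge $uv$ is internal to $M$ but, after deleting $v$, becomes an edge of $\delta_G(M')$. No other edge changes its status. Hence $\delta_G(M') = \bigl(\delta_G(M) \setminus \{e_1, e_2\}\bigr) \cup \{uv\}$, so $|\delta_G(M')| = |\delta_G(M)| - 1$, which yields $|\delta_G(M)| = (n+1) + 1 = n + 2$ and completes the induction.

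I would also note that the induction can be bypassed entirely: a tree on $n$ vertices has exactly $n - 1$ edges, so summing the degrees (all equal to $3$) of the vertices of $M$ in $G$ and counting each internal edge twice and each edge of $\delta_G(M)$ once gives $3n = 2(n-1) + |\delta_G(M)|$, hence $|\delta_G(M)| = n + 2$. There is essentially no genuine obstacle in this lemma; the only point that requires care is the bookkeeping in the inductive step — namely checking that removing the leaf $v$ exposes exactly one previously internal edge ($uv$) to the cut while removing exactly the two cut edges $e_1, e_2$ incident with $v$ — which is precisely where the hypothesis that $M$ is \emph{induced} is used.
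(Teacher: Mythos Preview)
Your proof is correct and follows exactly the approach the paper indicates: the paper does not spell out a proof but simply states that the lemma ``can be proven by induction,'' which is precisely what you carry out. Your alternative double-counting argument is also valid and equally routine.
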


\begin{corollary}
\label{coro:connected}
If $M$ is a non-empty induced subgraph of a cyclically $5$-connected cubic graph $G$, then $|\delta_G(M)| \ge 3$.
\end{corollary}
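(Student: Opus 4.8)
The plan is to reduce the statement to a single connected piece and then split according to whether that piece contains a cycle. First note that, since $G$ is connected and $M$ is a proper nonempty induced subgraph (the case $M=G$ being excluded, as then $\delta_G(M)=\emptyset$; this is implicit already in the discussion preceding the corollary and causes no trouble in applications), both $M$ and its complement $N := G - V(M)$ are nonempty and $\delta_G(M)=\delta_G(N)\neq\emptyset$. The key monotonicity observation is that for every component $M'$ of $M$ we have $\delta_G(M')\subseteq\delta_G(M)$: an edge leaving $M'$ cannot end in another component of $M$, because $M$, being an induced subgraph, has no edges between its components, so it must end outside $V(M)$. Hence $|\delta_G(M)|\ge|\delta_G(M')|$, and symmetrically $|\delta_G(M)|\ge|\delta_G(N')|$ for every component $N'$ of $N$. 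It therefore suffices to bound $|\delta_G(M')|$ or $|\delta_G(N')|$ from below for a suitably chosen component.

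Now I would distinguish cases. If some component $M'$ of $M$ is acyclic, then $M'$ is a connected acyclic induced subgraph of $G$, so Lemma \ref{lemma:acyclic-pole} gives $|\delta_G(M')| = |V(M')| + 2 \ge 3$, since $|V(M')|\ge 1$; combined with the observation above this yields $|\delta_G(M)|\ge 3$. The same argument applies verbatim if some component $N'$ of $N$ is acyclic. In the remaining case every component of $M$ and every component of $N$ contains a cycle, so in particular both $M$ and $N$ are cyclic; since $M$ and $N$ are unions of components of $G-\delta_G(M)$, the cut $\delta_G(M)$ separates a cycle of $M$ from a cycle of $N$ and is thus cycle-separating, whence $|\delta_G(M)|\ge\zeta(G)\ge 5\ge 3$ by cyclic $5$-connectivity of $G$.

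I do not anticipate a genuine obstacle here: the whole content is the inclusion $\delta_G(M')\subseteq\delta_G(M)$ together with a clean application of Lemma \ref{lemma:acyclic-pole}. The only points deserving a line of care are the verification of that inclusion (using that distinct components of an induced subgraph are joined by no edges) and the observation that in the ``all components cyclic'' case the cut $\delta_G(M)$ is genuinely cycle-separating, which is immediate because $M$ and $N$ each host a cycle and are exactly unions of the components of $G$ after removing $\delta_G(M)$.
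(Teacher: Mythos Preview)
Your proof is correct and follows the same cyclic/acyclic dichotomy that the paper sketches in the paragraph preceding Lemma~\ref{lemma:acyclic-pole}; the corollary itself is stated without a separate proof. Your reduction to a single connected component and the symmetric treatment of the complement $N$ are in fact more careful than the paper's one-line sketch, which tacitly assumes connectedness when invoking Lemma~\ref{lemma:acyclic-pole} and asserts $|\delta_G(M)|\ge 5$ whenever $M$ is cyclic without verifying that the complement also contains a cycle.
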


In general, a cyclic induced subgraph $H$ of a cyclically $5$-connected cubic graph with $|\delta_G(H)| = 6$ need not be $2$-connected, since $H$ may contain a bridge. However, as we show in the following lemma, $H$ contains only one bridge which is additionally in a special position.

\begin{lemma}
\label{lemma:6pole}
Let $H$ be a connected induced subgraph of a cyclically $5$-connected cubic graph $G$ such that $|\delta_G(H)| = 6$. Then exactly one of the following holds:
\begin{itemize}
\item[(i)] $H$ is acyclic;
\item[(ii)] $H$ contains exactly one bridge whose one end $x$ is incident with two edges from $\delta_G(H)$ and $H - x$ is a $2$-connected cyclic part of $G$;
\item[(iii)] all the edges from $\delta_G(H)$ are independent and $H$ is $2$-connected.
\end{itemize}
\end{lemma}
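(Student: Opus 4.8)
The plan is to analyze the structure of $H$ according to whether it is acyclic or cyclic, and in the cyclic case, according to whether $H$ is $2$-connected or has a cutvertex. First I would dispose of case (i): if $H$ is acyclic, Lemma \ref{lemma:acyclic-pole} forces $|V(H)| = 4$, and we are done; moreover the three listed alternatives are clearly mutually exclusive since (i) is the acyclic case while (ii) and (iii) are cyclic, and (ii) has a bridge while (iii) is $2$-connected. So from now on assume $H$ is cyclic. If $H$ is $2$-connected, I would argue that the six edges of $\delta_G(H)$ must be independent: if two of them shared an endpoint $v$, then $v$ would have degree $1$ in $H$, contradicting $2$-connectedness (a $2$-connected graph on at least three vertices has minimum degree $2$; the degenerate cases are easily excluded because $H$ is cyclic in a cyclically $5$-connected cubic graph, so $H$ is not a single vertex or a single edge). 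This gives case (iii).

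The substantive case is when $H$ is cyclic but not $2$-connected. By Menger/standard block theory, $H$ has a cutvertex. I would first bound the number of "attachment" vertices. Let $B_1, \dots, B_k$ be the blocks (maximal $2$-connected subgraphs or bridges) of $H$; consider the block-cut tree $T$. Each leaf block $B$ of $T$ contains exactly one cutvertex $c_B$ of $H$, and every other vertex of $B$ has all its remaining edges (the ones not in $B$) going into $\delta_G(H)$. The key counting step: because $G$ is cyclically $5$-connected, any subset of $V(H)$ whose induced subgraph is cyclic must send at least $5$ edges out of $H$ \emph{within $G$}, and any subset inducing an acyclic connected subgraph sends at least $3$. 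Applying this to the blocks and their unions, I would show that $H$ can have at most two blocks, that at most one of them is a bridge, and that a bridge block, if present, has a specific incidence pattern with $\delta_G(H)$.

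More concretely: suppose $H$ is not $2$-connected and let $x$ be a cutvertex. The vertex $x$ has degree $3$ in $G$; since $x$ is a cutvertex of $H$, it cannot be incident with any edge of $\delta_G(H)$ would be the generic expectation, but in fact we want the opposite in case (ii). I would split $H - x$ into its components $C_1, C_2$ (there are exactly two, since $\deg_G(x) = 3$ and $x$ is a cutvertex of $H$: the three edges at $x$ split as $2 + 1$ or as all three internal, but if all three are internal then $x$ is a cutvertex separating $H$ into at least two parts, still at most three, and a careful degree count using $|\delta_G(H)| = 6$ will pin this down). For each component $C_i$ together with possibly $x$, count the edges leaving it in $G$: these are the edges of $\delta_G(H)$ incident with $C_i$ plus the edge(s) from $C_i$ to $x$. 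If $C_i$ is cyclic, this count is $\ge 5$; summing over the (at most two) cyclic components and comparing with $|\delta_G(H)| + (\text{edges at } x) = 6 + 3 = 9$ should force that exactly one component, say $C_1$, is cyclic and receives $5$ edges of $\delta_G(H)$ with a single edge to $x$, while the other component $C_2$ is acyclic; then $C_2 \cup \{x\}$ is an acyclic connected induced subgraph, so by Lemma \ref{lemma:acyclic-pole} it emits $|V(C_2)| + 3$ edges, of which one goes to $C_1$ (the bridge) and the rest lie in $\delta_G(H)$ — giving $|V(C_2)| + 2$ edges of $\delta_G(H)$, hence $|V(C_2)| + 2 = 1$, i.e.\ this forces $V(C_2) = \emptyset$ or a single vertex incident with two edges of $\delta_G(H)$. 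Tracking this carefully yields the configuration in (ii): the bridge is the edge $C_1 x$ (or an edge inside $C_2 \cup \{x\}$), its end $x$ is incident with two edges of $\delta_G(H)$, and $H - x$ has a single $2$-connected cyclic component which, by Proposition \ref{prop:fragments} applied after reconnecting, is a $2$-connected cyclic part of $G$.

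The main obstacle I anticipate is the bookkeeping in the non-$2$-connected case: correctly enumerating how the three edges at the cutvertex $x$ distribute among components of $H - x$, handling the sub-case where $x$ is incident to one, two, or no edges of $\delta_G(H)$, and repeatedly invoking the $\ge 5$ bound (for cyclic pieces) versus the exact $|V| + 2$ formula (for acyclic pieces, Lemma \ref{lemma:acyclic-pole}) without double-counting the bridge edge. A secondary subtlety is verifying that $H - x$ is genuinely a cyclic part of $G$ — i.e.\ that the corresponding $5$-edge-cut in $G$ is cycle-separating — which needs the other side of the cut to be cyclic; this follows because $G$ is cyclically $5$-connected and not merely a small exceptional graph, but it should be spelled out. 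Uniqueness of the bridge then follows because a second bridge would create a third block and violate the counting already established.
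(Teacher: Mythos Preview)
Your overall strategy---split into acyclic, $2$-connected, and ``cyclic with a cutvertex'' cases, and in the last case count outgoing edges of the pieces using the $\ge 5$ bound for cyclic subgraphs and Lemma~\ref{lemma:acyclic-pole}/Corollary~\ref{coro:connected} for acyclic ones---is exactly the paper's approach. The difference is that the paper works from a \emph{bridge} rather than a cutvertex. In a subcubic graph these are equivalent (a cutvertex of degree at most $3$ sends at most one edge to some component, and that edge is a bridge), but the bridge formulation is dramatically cleaner: if $e$ is a bridge with components $C_1$ (cyclic) and $C_2$, then $|\delta_G(C_1)\cap\delta_G(H)|\ge 5-1=4$ and $|\delta_G(C_2)\cap\delta_G(H)|\ge 3-1=2$; since these sum to $|\delta_G(H)|=6$ we have equality, so $|\delta_G(C_2)|=3$ forces $|V(C_2)|=1$, and $|\delta_G(C_1)|=5$ makes $C_1$ a fragment, hence $2$-connected by Proposition~\ref{prop:fragments}. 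That is the entire argument---no block-cut tree, no case analysis on how the three edges at a cutvertex distribute.

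Your concrete execution goes off the rails: the line ``$|V(C_2)|+2=1$'' cannot be right (it gives $|V(C_2)|=-1$), and just before it you assert that the cyclic component $C_1$ ``receives $5$ edges of $\delta_G(H)$'', which would already exhaust all but one of the six edges and is not what the counting gives. These errors stem from juggling the cutvertex $x$ and the degree-$1$ vertex (also called $x$ in the lemma statement) simultaneously. If you redo the count from a bridge as above, all of your anticipated bookkeeping obstacles disappear, and uniqueness of the bridge is automatic because $H-x=C_1$ is $2$-connected. Your side concern about whether $C_1$ is \emph{genuinely} a cyclic part (i.e.\ whether the complement is cyclic) is legitimate; the paper invokes Proposition~\ref{prop:fragments} directly, and one can alternatively argue that a bridge in $C_1$ would split it into two pieces whose outgoing edges sum to $7$, forcing both to be acyclic and hence $C_1$ acyclic, a contradiction.
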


\begin{proof}
If $H$ is acyclic then only (i) holds and if $H$ is $2$-connected then only (iii) is true because if some of the edges from $\delta_G(H)$ were adjacent, there would be a bridge in $H$. So it is sufficient to show that if $H$ is cyclic and contains a bridge, then (ii) holds true.

Suppose that $H$ contains a bridge which separates $H$ into components $C_1$ and $C_2$. Since $H$ is cyclic, at least one of the $C_1$ and $C_2$ has to contain a~cycle, say $C_1$.
Therefore $|\delta_G(C_1)| \ge 5$, so $|\delta_G(C_1) \cap \delta_G(H)| \ge 4$. By Corollary \ref{coro:connected} we have that $|\delta_G(C_2)| \ge 3$, so $|\delta_G(C_2) \cap \delta_G(H)| \ge 2$. Since $|\delta_G(H)| = 6$, we get that $|\delta_G(C_1) \cap \delta_G(H)| = 4$ and $|\delta_G(C_2) \cap \delta_G(H)| = 2$.
Thus $C_2$ contains only one vertex and that vertex is incident with two edges from $\delta_G(H)$. Moreover, since $C_1$ is cyclic and has five outgoing edges, $C_1$ is a~fragment and hence cyclically $2$-connected due to Proposition \ref{prop:fragments}. So (ii) is satisfied, which concludes our proof.
\end{proof}

Finally, we formalise the process of completing a cyclic part to a cubic graph by adding three new vertices lying on a path of length two.

\begin{definition}
Let $H$ be a cyclic part of a cubic graph $G$ with $\zeta(G) = 5$ and let $a_1$, $a_2$, $a_3$, $a_4$, and $a_5$ be the vertices of $H$ of degree $2$. We add to $H$ three vertices $x$, $y$ and $z$ and edges $xy$, $yz$, $xa_1$, $xa_2$, $ya_3$, $za_4$, and $za_5$. We denote the graph obtained in this way by $H(a_1, a_2, a_3, a_4, a_5)$. Throughout this paper, the three newly added vertices will be consistently denoted by $x$, $y$ and $z$.
\end{definition}

\section{Extensions without short cycles}
\label{sec:lemmas}

In this section we show that each cyclic part $H \ncong C_5$ of a cubic graph with $\zeta(G) = 5$ can be extended to a cubic graph $\bar{H} = H(a_1, a_2, a_3, a_4, a_5)$ which has girth at least $5$.

\begin{lemma}
\label{lemma:degrees}
Let $H$ be a cyclic part of a cubic graph with $\zeta(G) = 5$ which is not a $5$-cycle and let $A$ be the set of vertices of $H$ of degree $2$. Then each vertex from $A$ has at most one neighbour in $A$.
\end{lemma}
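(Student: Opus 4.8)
The plan is to argue by contradiction: suppose some vertex $a\in A$ has two neighbours $b,c\in A$. I would first record the standing facts. Since $H$ is a cyclic part of $G$, it comes from a minimum cycle-separating cut, so $|\delta_G(H)|=5$ and $G-V(H)$ is also a cyclic part, in particular cyclic. By Proposition~\ref{prop:fragments} (as $\zeta(G)=5>3$) the subgraph $H$ is $2$-connected, so every vertex has degree $2$ or $3$ in $H$; since a degree-$2$ vertex is incident with exactly one edge of $\delta_G(H)$ and a degree-$3$ vertex with none, we get $|A|=5$. Next I would dispose of the easy subcase $b\sim c$: then $\{a,b,c\}$ induces a triangle in which each of the three vertices already has degree $2$, hence has no further neighbour in $H$, so $\{a,b,c\}$ is a connected component of $H$; as $H$ is connected this forces $H$ to be that triangle and $|A|=3$, a contradiction. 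Hence from now on $b\not\sim c$, and $b$ (resp.\ $c$) has a unique second neighbour $u$ (resp.\ $v$) in $H$; note $u,v\notin\{a,b,c\}$, using $b\not\sim c$ to rule out $u=c$ and $v=b$.

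The core of the argument is to examine $H^-:=H-\{a,b,c\}$. First I would show $H^-$ is connected: $H-a$ is connected because $H$ is $2$-connected; in $H-a$ the vertex $b$ has degree $1$ (its only remaining neighbour being $u$), so deleting it preserves connectivity, and then $c$ has degree $1$ in $(H-a)-b$ (its only remaining neighbour being $v$, which differs from $b$), so $H^-$ is connected. Then I would count $\delta_G(H^-)$ by partitioning $V(G)$ into $V(H^-)$, $\{a,b,c\}$ and $V(G)-V(H)$. Since $G[V(H)]=H$, the only edges between $\{a,b,c\}$ and $H^-$ are $H$-edges, and these are exactly $bu$ and $cv$ (the vertex $a$ sends both its $H$-edges into $\{b,c\}$); that is two edges. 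The edges between $H^-$ and $V(G)-V(H)$ are precisely the edges of $\delta_G(H)$ whose $H$-end lies in $H^-$; since three of the five vertices of $A$ are $a,b,c$, exactly the two cut edges at the remaining two vertices of $A$ qualify. Hence $|\delta_G(H^-)|=4$.

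To conclude I would split on whether $H^-$ is cyclic. If $H^-$ is cyclic, then $\delta_G(H^-)$ is a cycle-separating cut of $G$ (its other side contains the cyclic graph $G-V(H)$) of size $4<5=\zeta(G)$, a contradiction. If $H^-$ is acyclic, then since it is connected, Lemma~\ref{lemma:acyclic-pole} gives $|\delta_G(H^-)|=|V(H^-)|+2$, whence $|V(H^-)|=2$ and $|V(H)|=5$; but then all five vertices of $H$ belong to $A$, so $H$ is $2$-regular and connected, i.e.\ $H\cong C_5$, contradicting the hypothesis. In every case we reach a contradiction, which proves that each vertex of $A$ has at most one neighbour in $A$.

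The step I expect to require the most care is the connectedness of $H^-$, since it is needed both to invoke Lemma~\ref{lemma:acyclic-pole} in the acyclic case and to guarantee that $\delta_G(H^-)$ is genuinely cycle-separating in the cyclic case; it relies on $b\not\sim c$ and on $a$ having its two $H$-edges inside $\{a,b,c\}$. The other delicate point is the borderline count $|V(H)|=5$, which is exactly where the exclusion $H\ncong C_5$ is used.
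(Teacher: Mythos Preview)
Your proof is correct and follows essentially the same approach as the paper: remove the three vertices $\{a,b,c\}$, count that the remainder has four outgoing edges in $G$, and conclude via Lemma~\ref{lemma:acyclic-pole} that $H$ must be a $5$-cycle. The paper's version is considerably terser---it omits the separate treatment of $b\sim c$, the verification that $H^-$ is connected, and the explicit cyclic/acyclic split---so your write-up fills in details the paper leaves to the reader; your remark that connectedness of $H^-$ is needed for the cyclic case is not quite right (a cycle-separating cut does not require either side to be connected), but this does not affect correctness.
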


\begin{proof}
Let $a_2$ be a vertex from $A$ with two neighbours $a_1$ and $a_3$ in $A$. Then the induced subgraph $G[V(H) - \{a_1, a_2, a_3\}]$ has only four outgoing edges, hence it is acyclic and contains only two vertices due to Lemma \ref{lemma:acyclic-pole} which means that $H$ is a $5$-cycle (see Figure \ref{fig:g5-5cycle}); a contradiction.
\end{proof}

\begin{figure}[h]
\centering
\includegraphics{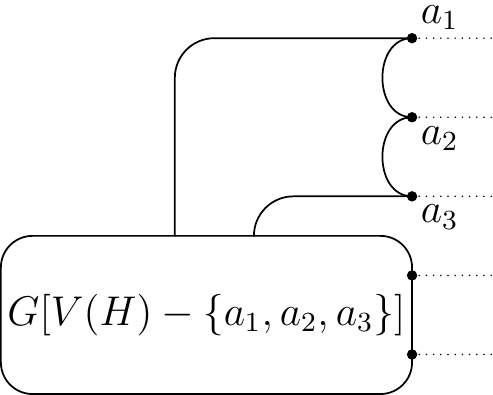}
\caption{The cyclic part $H$ in the case $a_2a_1, a_2a_3 \in E(H)$, the edges from $\delta_G(H)$ are dotted}
\label{fig:g5-5cycle}
\end{figure}

\begin{lemma}
\label{lemma:girth-extension}
Let $H$ be a cyclic part of a cubic graph $G$ with $\zeta(G) = 5$ that is not a $5$-cycle. Then there exists a permutation $a_1a_2a_3a_4a_5$ of vertices of $H$ of degree $2$ such that $H(a_1, a_2, a_3, a_4, a_5)$ has girth at least five.
\end{lemma}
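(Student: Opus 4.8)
The plan is to reduce the statement to a combinatorial condition on the set $A$ of the five degree-$2$ vertices of $H$, and then to verify that condition by a short case analysis governed by the girth. Since every cycle of $H$ is a cycle of $G$, we have $g(H) \ge g(G) \ge \zeta(G) = 5$, so $H$ has no cycle of length at most $4$; in particular (and since $\bar H := H(a_1, a_2, a_3, a_4, a_5)$ plainly has no loops or parallel edges) every cycle of $\bar H$ of length at most $4$ passes through at least one of the new vertices $x, y, z$. Enumerating the possibilities according to which of $x, y, z$ such a short cycle uses — using that $y$ has the single $H$-neighbour $a_3$, and that a cycle through both $x$ and $z$ must either use $y$ or have length at least $6$ — one checks that $g(\bar H) \ge 5$ is equivalent to the conjunction of $\dist_H(a_1, a_2) \ge 3$, $\dist_H(a_4, a_5) \ge 3$, and ``$a_3$ has no neighbour in $A$''. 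Hence it suffices to partition $A$ into two unordered pairs and a singleton so that the singleton is isolated in $H[A]$ and each pair has $H$-distance at least $3$.

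Two structural facts will drive the argument. First, by Lemma \ref{lemma:degrees}, $H[A]$ is a matching; set $e := |E(H[A])| \in \{0, 1, 2\}$, so exactly $5 - 2e$ vertices of $A$ are isolated in $H[A]$. Second — this is where $g(H) \ge 5$ is used repeatedly — if $u, v \in A$ have $\dist_H(u, v) = 2$, then they have a \emph{unique} common neighbour $c$ (two of them would close a $4$-cycle), and $c \notin A$ (a common neighbour lying in $A$ would have two $A$-neighbours, contradicting Lemma \ref{lemma:degrees}), so $c$ has degree $3$ in $H$; moreover, if $u$ is matched in $H[A]$ to $u'$, then $c \ne u'$ (again $u'$ would have two $A$-neighbours), so $c$ is a neighbour of $u$ outside $A$. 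Consequently each vertex of $A$ has at most two neighbours outside $A$, each of degree $3$, and these bound how many vertices of $A$ can lie within distance $2$ of it.

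I would then split on $e$. If $e = 2$, with matched pairs $\{u_1, u_2\}$, $\{v_1, v_2\}$ and isolated vertex $w$: the singleton must be $w$, and I need a perfect matching of the four crossing pairs $u_iv_j$ into two pairs at distance $\ge 3$. The facts above show $u_i$ cannot be at distance $\le 2$ from both $v_1$ and $v_2$ (the two common neighbours would both equal the outside neighbour of $u_i$, forming a triangle with the edge $v_1v_2$), so the close crossing pairs form a matching inside $K_{2,2}$, and the other perfect matching of $K_{2,2}$ then consists of two far pairs. If $e = 1$, with matched pair $\{u_1, u_2\}$ and isolated vertices $w_1, w_2, w_3$: the singleton lies in $\{w_1, w_2, w_3\}$, and assuming no choice admits a valid cross-pairing of $u_1, u_2$ with the remaining two $w_j$'s, I collect the six resulting closeness requirements and combine them with the bounds ``$u_i$ is close to at most two of the $w_j$'' (from the degree of the outside neighbour of $u_i$) and ``$u_1$ and $u_2$ are not both close to the same two $w_j$'' (else a $4$-cycle through those two $w_j$ and the outside neighbours of $u_1$ and $u_2$); a short logical check shows these cannot all hold.

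The main obstacle is $e = 0$, where every vertex of $A$ is an admissible singleton, so it is enough to find a four-element subset of $A$ splitting into two far pairs — equivalently, the ``far graph'' $F$ on $A$ (edges: pairs at distance $\ge 3$) should have a matching of size $2$. If it does not, then $F$ has no two disjoint edges, hence $F$ is a subgraph of the star $K_{1,4}$ or of a triangle together with two isolated vertices. In the star case, all $\binom{4}{2}$ pairs among the four non-centre vertices of $A$ lie at distance exactly $2$; since each of these vertices has degree $2$ while each of the (unique, degree-$3$) common neighbours is adjacent to at most three of them, a counting argument forces two of the four to have the same pair of neighbours — a $4$-cycle. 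In the triangle case, the two vertices of $A$ outside the triangle of $F$ lie at distance $2$ from everything in $A$; following their outside neighbours (each of degree $3$, hence adjacent to exactly two further vertices of $A$) once more traps two vertices of $A$ with a common neighbour pair — a $4$-cycle. Either way we contradict $g(H) \ge 5$, so the required four-subset, and hence the labelling, exists. The bookkeeping in this $e = 0$ dichotomy is the part I expect to be most delicate.
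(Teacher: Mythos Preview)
Your proposal is correct and follows essentially the same route as the paper. Both reduce to the same combinatorial condition on $A$ (your formulation $\dist_H(a_1,a_2)\ge 3$, $\dist_H(a_4,a_5)\ge 3$, $a_3$ isolated in $H[A]$ is in fact the clean version of the paper's criterion), split into the same three cases on $e=|E(H[A])|$, and in each case exploit the same girth-$5$ obstructions; your ``far graph'' $F$ is simply the complement of the paper's distance-$2$ graph $D$, and your $e=0$ dichotomy ``$F\subseteq K_{1,4}$ or $F\subseteq K_3$'' is exactly the complement of the paper's ``$D\supseteq K_4$ or $D$ has two vertices of degree $4$''.
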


\begin{proof}
Let $A = \{a_1, a_2, a_3, a_4, a_5\}$ be the set of the five degree $2$ vertices of $H$. Furthermore, let $D$ be the graph with vertex set $A$ where $a_ia_j \in E(D)$ if $\dist_H(a_i, a_j) = 2$ for each $a_i, a_j \in A$. Note that if $a_ia_j \in E(D)$, then there exist exactly one path $a_iva_j$ in $H$. Observe that the graph $H(a_1, a_2, a_3, a_4, a_5)$ has girth at least five if and only if
$$\deg_A(a_3) = 0 \qquad \text{and} \qquad a_1a_2, a_4a_5 \notin E(D).$$
By Lemma \ref{lemma:degrees}, there are at most two edges between the vertices from $A$. We divide the proof into three cases according to the number of edges in $H[A]$.

\paragraph{Case (i)} Assume that the induced subgraph $H[A]$ contains two edges, say $a_1a_5$ and $a_2a_4$.
We show that all the vertices $a_1$, $a_2$, $a_4$, and $a_5$ have at most one neighbour in $D$ among the vertices $\{a_1, a_2, a_4, a_5\}$. Suppose to the contrary that, say $a_1$ has two neighbours in $\{a_1, a_2, a_4, a_5\}$. Since obviously $a_1a_5 \notin E(D)$, we have $a_1a_2 \in E(D)$ and $a_1a_4 \in E(D)$, so there exist two paths $a_1v_2a_2$ and $a_1v_4a_4$ in $H$. However, since $\deg_H(a_1) = 2$ and $a_1$ is already adjacent to $a_5$, the vertices $v_2$ and $v_4$ coincide. It follows that the subgraph $H$ contains a $3$-cycle $a_2v_2a_4$---a contradiction.
Therefore, there exists a permutation $b_2b_4$ of $\{a_2, a_4\}$ such that $a_1b_2, a_5b_4 \notin E(D)$ and thus $g(H(a_1, b_2, a_3, b_4, a_5)) \ge 5$.

\paragraph{Case (ii)} Let $H[A]$ contain only one edge and denote it by $a_1a_5$ in such a way that $\deg_D(a_5) \le \deg_D(a_1)$. We show that $\deg_D(a_1) \le 2$. Suppose to the contrary that $\deg_D(a_1) \ge 3$. Then, there are three paths $a_1va_2$, $a_1va_3$ and $a_1va_4$ in $H$, each going through the same neighbour $v$ of $a_1$ because $a_1$ is already incident with $a_5$. Therefore, $\deg_H(v) = 4$, which is a contradiction. Therefore, one of the vertices $a_2$, $a_3$ and $a_4$ is not adjacent to $a_1$ in $D$, say $a_1a_2 \notin E(D)$. We show that $a_5a_4 \notin E(D)$ or $a_5a_3 \notin E(D)$. Suppose to the contrary that both the edges $a_5a_4$ and $a_5a_3$ are in $E(D)$. Since $2 = \deg_D(a_5) \le \deg_D(a_1) \le 2$, we have $a_1a_3, a_1a_4 \in E(D)$. The latter means, there are paths $a_1ua_3$, $a_1ua_4$, $a_5va_4$, and $a_5va_3$ in $H$.
However, then $a_3ua_4v$ is a $4$-cycle in $H$ which is a contradiction. Thus, one of the edges $a_5a_4$ and $a_5a_3$, say it is $a_5a_4$, is not in $E(D)$ and then the graph $H(a_1, a_2, a_3, a_4, a_5)$ has girth at least five.

\paragraph{Case (iii)} Finally, assume that $H[A]$ contains no edges. We show that one can choose four distinct vertices $b_1, b_2, b_4, b_5 \in V(D)$ such that $b_1b_2, b_4b_5 \notin E(D)$. It is a simple matter to verify that if this is not possible, then the graph $D$ contains $K_4$ as a subgraph or it contains two vertices of degree $4$.

At first, suppose that $D$ contains a $K_4$-subgraph consisting of the vertices $a_1$, $a_2$, $a_3$, and $a_4$. Then, there are paths $a_1v_2a_2$, $a_1v_3a_3$ and $a_1v_4a_4$ in $H$. However, since $\deg_H(a_1) = 2$ some of the vertices $v_2$, $v_3$, $v_4$ have to coincide, say $v_2 \equiv v_3$.
Analogously, there are also paths $a_4u_2a_2$ and $a_4u_3v_3$ in $D$ and some of the vertices $u_2, u_3, v_4$ have to coincide. However, then one of $(u_2 = u_3, a_3, v_3 = v_2, a_2)$, $(u_3 = v_4, a_4, v_3, a_3)$ or $(v_4 = u_2, a_2, v_2, a_1)$ is a $4$-cycle in $H$ and this is a contradiction.

Now suppose that $D$ contains two vertices of degree $4$, say $a_1$ and $a_5$. Then, we have paths $a_1v_2a_2$, $a_1v_3a_3$, $a_1v_4a_4$, and $a_1v_5a_5$ in $H$. Since $\deg_H(a_1) = 2$, we have $|\{v_2, v_3, v_4, v_5\}| = 4$, so say $v_2 = v_3$ and $v_4 = v_5$. Analogously, there are paths $a_5v_5a_1$, $a_5v_5a_4$, $a_5uv_3$, and $a_5uv_2$ in $H$. However $a_2v_2a_3u$ is a $4$-cycle in $H$ and that is a contradiction.
Therefore $g(H(b_1, b_2, b_3, b_4, b_5)) \ge 5$, where $b_3 \in A - \{b_1, b_2, b_4, b_5\}$.
\end{proof}

\begin{lemma}
\label{lemma:distribution}
Let $H$ be a cyclic part of a cubic graph $G$ with $\zeta(G) = 5$ and let $a_1$, $a_2$, $a_3$, $a_4$, and $a_5$ be the vertices of degree $2$ in $H$. Suppose that the graph $\bar{H} = H(a_1, a_2, a_3, a_4, a_5)$ has girth $5$ and that $\bar{H}$ contains a minimum cycle-separating cut $S$ of size smaller than $5$. Then $|S| = 4$ and the cut $S$ separates $\{a_1, a_2, x\}$ from $\{a_4, a_5, z\}$.
\end{lemma}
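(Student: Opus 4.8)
We are given that $\bar H = H(a_1,\dots,a_5)$ has girth $5$ but contains a cycle-separating cut $S$ with $|S| < 5$. Since $G$ has $\zeta(G)=5$ and $H$ is cyclic, $|\delta_G(H)| = 5$; I will work inside $\bar H$, whose only edges not in $H$ are $xy, yz$ and the five ``attachment'' edges $xa_1, xa_2, ya_3, za_4, za_5$. Write $T = \{x,y,z\}$ for the added triangle-path. The plan is: first bound $|S|$ from below to get $|S|=4$; then analyze how $S$ meets $T$ and the attachment edges; finally pin down the two sides of the cut.

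First I would argue $|S| \ge 4$, so $|S| = 4$. Let $\bar H - S$ have components, at least two of which are cyclic; let $M$ be one cyclic component. If $M \cap T = \emptyset$, then $M$ is a subgraph of $H$, and $\delta_{\bar H}(M)$ consists of edges of $H$ together with some attachment edges; but a cyclic subgraph of $H$ already uses at least some attachment edges to reach $T$, and more to the point $M$ viewed inside $G$ has $|\delta_G(M)| \ge 5$ since $M$ is cyclic and $G$ is cyclically $5$-connected (here I would be careful: the edges of $\delta_{\bar H}(M)$ landing in $T$ correspond to edges of $\delta_G(H)$, while the rest are genuine edges of $G$, so $|\delta_G(M)| = |\delta_{\bar H}(M)|$ up to this identification, forcing $|S| \ge |\delta_{\bar H}(M)| \ge 5$, a contradiction). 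Hence every cyclic component of $\bar H - S$ meets $T$. Since $|T|=3$ and the two cyclic components are disjoint, and $y$ is a cut-vertex-like separator of $T$, the two cyclic components split $T$ into a part containing $\{x\}$ (plus possibly $y$) and a part containing $\{z\}$. Combined with girth $5$ and Lemma \ref{lemma:acyclic-pole} applied to the acyclic ``remainder'' pieces, a short edge count rules out $|S| = 3$: a cut of size $3$ in a cubic graph separating two cyclic parts would make $\bar H$ itself fail to be cyclically $4$-connected in a way incompatible with girth $5$ (three independent edges separating two cycles forces, via Corollary \ref{coro:connected} type bounds, a triangle or a small cycle). So $|S|=4$.

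Next, with $|S|=4$, let $B_1 \ni x$ and $B_2 \ni z$ be the two cyclic components (we just showed each cyclic component meets $T$, and $x,z$ are the only candidates to lie in distinct components since $\deg x = \deg z = 3$ and $y$ is adjacent to both). The vertex $y$ lies in one of $B_1$, $B_2$, or in a third (acyclic) component; I would show $y \in B_1 \cup B_2$, because otherwise the edge $xy$ or $yz$ lies in $S$ and then $S$ restricted to each side is too small to be a cyclic-separating cut given girth $5$ — more precisely, removing an acyclic piece containing $y$ changes the counts by Lemma \ref{lemma:acyclic-pole} and drives $|\delta(B_i)|$ below $5$, contradicting that $B_i$ is cyclic in a (virtually) cyclically $5$-connected setting. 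So WLOG $y \in B_1$, i.e.\ $\{x,y\} \subseteq B_1$ and $\{z\} \subseteq B_2$, and then the attachment edges $xa_1, xa_2, ya_3$ leave $B_1$ only if $a_1, a_2, a_3 \notin B_1$, while $za_4, za_5$ leave $B_2$ only if $a_4, a_5 \notin B_2$. Counting: $\delta_{\bar H}(B_1)$ contains $yz$ together with whichever of $xa_1, xa_2, ya_3$ cross, and $\delta_{\bar H}(B_2)$ contains $yz$ together with whichever of $za_4, za_5$ cross; since $|S|=4$ and $yz \in S$, exactly three more edges cross, and a parity/degree count on the cubic graph $\bar H$ forces $xa_1, xa_2 \in S$ (so $a_1, a_2 \notin B_1$) and $za_4, za_5 \in S$ (so $a_4, a_5 \notin B_2$) — wait, that is four edges plus $yz$, five total; so in fact $yz \notin S$, and instead $S = \{xa_1, xa_2, za_4, za_5\}$ up to relabeling within $\{a_1,a_2\}$ and within $\{a_4,a_5\}$, giving $B_1 \supseteq \{x,y,z\}$ — I would redo this count carefully, using that $a_3$ is adjacent only to $y$ and that the side containing $y$ also contains $z$, to conclude $S$ separates $\{a_1,a_2,x\}$ from $\{a_4,a_5,z\}$ with $y,z,a_3$ on the $z$-side or $y$ on the $x$-side, whichever the count dictates.

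The main obstacle I anticipate is the bookkeeping in this last step: showing that the cut of size $4$ must consist exactly of two attachment edges at $x$ and two at $z$, and in particular that $y$ (and hence $a_3$) ends up on the prescribed side, rather than, say, $S$ containing $xy$ or $yz$. The right tool is to apply Lemma \ref{lemma:acyclic-pole} to the acyclic components of $\bar H - S$ (if any) and Corollary \ref{coro:connected}-style inequalities to each cyclic side, translating $\delta_{\bar H}(B_i)$-counts back to $\delta_G(\cdot)$-counts in $G$; the girth-$5$ hypothesis on $\bar H$ is what forbids the degenerate configurations (a crossing attachment edge at $x$ together with $xy \in S$ would create a short cycle or violate a count). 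Once the cut is shown to use exactly $xa_{i}, xa_{j}, za_{k}, za_{\ell}$, relabeling the degree-$2$ vertices so that $\{a_1,a_2\} = \{a_i,a_j\}$ and $\{a_4,a_5\} = \{a_k,a_\ell\}$ yields precisely the claimed separation of $\{a_1,a_2,x\}$ from $\{a_4,a_5,z\}$.
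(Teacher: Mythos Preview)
Your plan has a fundamental misreading of the conclusion. The lemma asserts that $\{a_1,a_2,x\}$ lie in one component of $\bar H - S$ and $\{a_4,a_5,z\}$ in the other; in particular $a_1,a_2$ are on the \emph{same} side as $x$, so the attachment edges $xa_1,xa_2$ are \emph{not} in $S$. Your target ``$S = \{xa_1,xa_2,za_4,za_5\}$'' is therefore the opposite of what is claimed, and in fact that set is not even an edge-cut of $\bar H$: after deleting those four edges the path $xyz$ is still attached to $H$ via $ya_3$, and $H$ itself is $2$-connected by Proposition~\ref{prop:fragments}, so $\bar H$ remains connected. The actual cut $S$ consists of one of $xy,yz$ together with three edges lying inside $H$.

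The paper's argument is short and direct, and quite different from your outline. It first shows that $x$ and $z$ lie in different components: if both were in $C_1$ then (since a minimum cut has no two adjacent edges) $y\in C_1$ too, so the other cyclic component $C_2$ would sit entirely inside $H\subseteq G$ with $|\delta_G(C_2)|\le |S|<5$, contradicting $\zeta(G)=5$. With $x\in C_1$, $z\in C_2$ and, say, $y\in C_2$, the ``no adjacent edges in a minimum cut'' rule immediately forces $a_1,a_2\in C_1$ and $a_3\in C_2$. A short case argument (if $a_4\in C_1$ then $G[C_2-\{y,z\}]$ has at most four outgoing edges in $G$, hence is acyclic, hence by Lemma~\ref{lemma:acyclic-pole} is just the edge $a_3a_5$, creating the $4$-cycle $a_3a_5zy$) places $a_4,a_5\in C_2$. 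Only then is $|S|=4$ established, using $\dist_H(a_1,a_2)\ge 3$ to show $C_1-x$ is cyclic and hence needs five outgoing edges in $G$. Your attempt to prove $|S|\ge 4$ first, via a vague ``incompatible with girth~$5$'' claim, does not work as stated; the paper gets this bound only after the distribution of the $a_i$ is known.
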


\begin{proof}
We start by showing that the vertices $x$ and $z$ are in different components $C_1$ and $C_2$ of $\bar{H} - S$. Suppose to the contrary that $x, z \in C_1$. Then the common neighbour $y$ of $x$ and $z$ is also in $C_1$, otherwise the minimal cut $S$ would contain adjacent edges $xy$ and $yz$. However, the cut $S$ with $|S| < 5$ separates the cyclic component $C_2$ in a cyclically $5$-connected graph $G$, which is a contradiction.

Thus the vertices $x$ and $z$ are in different components, say $x \in C_1$ and $z \in C_2$. Additionally, we assume that $y \in C_2$; in the case $y \in C_1$ we can proceed analogously. The two neighbours $a_1$ and $a_2$ of $x$ are also in $C_1$ because otherwise we would have two adjacent edges in the minimal cut $S$. Also, the neighbour $a_3$ of $y$ is in the component $C_2$ for the same reason.

We show that $a_4 \in C_2$. Suppose to the contrary that $a_4 \in C_1$. The vertex $a_5$ is in $C_2$ since the adjacent edges $za_4$ and $za_5$ cannot be both in $S$.
The subgraph $G[C_2 - \{y, z\}]$ is separated in $G$ by at most four edges: one edge incident with $a_3$, another one incident with $a_5$ and at most two edges from $S - \{xy, za_4 \}$. Hence $G[C_2 - \{y, z\}]$ has to be acyclic, and since it contains at least two vertices $a_3$ and $a_5$, due to Lemma \ref{lemma:acyclic-pole}, it contains no more vertices and only one edge $a_3a_5$, and also $|S| = 4$. However, this yields a $4$-cycle $a_3a_5zy$ in $\bar{H}$, which is a contradiction. Therefore, $x, a_1, a_2 \in C_1$ and $y, z, a_3, a_4, a_5 \in C_2$ as desired. Finally, if we had $|S| < 4$, then the subgraph $C_1 - x$ would be separated in $G$ by at most four edges: two edges coming from $a_1$ and $a_2$, and at most two edges from $S - \{xy\}$. However, $\dist_H(a_1, a_2) \ge 3$, so $C_1 - x$ has to be cyclic and this is in contradiction with $\zeta(G) = 5$. Therefore $|S| = 4$.
\end{proof}

\section{Main result}
\label{sec:main}

\begin{theorem}
\label{thm:main}
Let $H$ be a cyclic part of a cubic graph $G$ with $\zeta(G) = 5$. If $H$ is not a cycle of length 5, then $H$ can be extended to a cyclically $5$-connected cubic graph by adding three new vertices on a path of length two and by restoring regularity.
\end{theorem}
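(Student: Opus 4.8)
The plan is to start from the weaker completion supplied by Lemma~\ref{lemma:girth-extension} and repair it if it happens to fail. By that lemma there is a permutation $a_1a_2a_3a_4a_5$ of the degree-$2$ vertices of $H$ for which $\bar H = H(a_1,a_2,a_3,a_4,a_5)$ has girth at least $5$; if $\bar H$ is cyclically $5$-connected we are done, so assume it is not. Then Lemma~\ref{lemma:distribution} applies and yields a minimum cycle-separating cut $S$ of $\bar H$ with $|S|=4$ separating $\{x,a_1,a_2\}$ from $\{z,a_4,a_5\}$; following its proof we may assume $y$ and $a_3$ lie on the $z$-side. Writing $C_1,C_2$ for the components of $\bar H-S$, put $V_1 = V(C_1)\setminus\{x\}$ and $V_2 = V(C_2)\setminus\{y,z\}$; then $S=\{xy\}\cup S'$ where $S'=S\cap E(H)$ is a $3$-edge-cut of $H$ with $a_1,a_2\in V_1$ and $a_3,a_4,a_5\in V_2$.

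Next I would extract the structure forced by such a cut. For $W\subseteq V(H)$ one has $|\delta_G(H[W])| = |\delta_H(W)| + |W\cap A|$, where $A=\{a_1,\dots,a_5\}$, since each vertex of $A$ contributes exactly one edge of $\delta_G(H)$. Applying this with $W=V_1$ and $W=V_2$ together with Lemma~\ref{lemma:acyclic-pole}, Corollary~\ref{coro:connected}, and the fact that $\bar H$ (hence $H$) has girth at least $5$ to kill the tiny cases, I expect to obtain: $H[V_1]$ and $H[V_2]$ are both cyclic; the three edges of $S'$ have pairwise distinct ends in $V_1$ and pairwise distinct ends in $V_2$, none belonging to $A$; consequently $|V_1|,|V_2|$ are not too small (in particular $|V(H)|\ge 11$); $H[V_1]$ is itself a fragment of $G$, its five degree-$2$ vertices being $a_1$, $a_2$, and the three $V_1$-ends of $S'$, so $H[V_1]$ is $2$-connected by Proposition~\ref{prop:fragments}; and $H[V_2]$ is a connected cyclic subgraph with $|\delta_G(H[V_2])|=6$, to which Lemma~\ref{lemma:6pole} applies. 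Moreover $H[V_1]\not\cong C_5$, for otherwise $x$ would be joined to two vertices of a $5$-cycle inside $C_1$, creating a triangle or $4$-cycle in $\bar H$.

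Now I would look for a better permutation. Call a pair $\{a_i,a_j\}\subseteq A$ \emph{separable} if some $3$-edge-cut of $H$ has $\{a_i,a_j\}$ on one side and the remaining three vertices of $A$ on the other. The contrapositive of Lemma~\ref{lemma:distribution}, combined with the structural analysis above, gives: if $\tau=b_1b_2b_3b_4b_5$ is a permutation with $H(\tau)$ of girth at least $5$ and with neither $\{b_1,b_2\}$ nor $\{b_4,b_5\}$ separable, then $H(\tau)$ is cyclically $5$-connected. So it suffices to produce such a $\tau$. We already know $\{a_1,a_2\}$ is separable (via $S'$; by the $x\leftrightarrow z$ symmetry we may assume it is the $x$-side pair that is separable). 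The idea is to re-run the three-case analysis of the proof of Lemma~\ref{lemma:girth-extension} — which already leaves considerable freedom in the choice of the two outer pairs and of the middle vertex $b_3$ — while additionally steering both outer pairs away from separable pairs. To make that work I must first bound how many pairs can be separable and how they are positioned relative to $\{a_1,a_2\}$ and to the edges of $H[A]$ and of the distance-$2$ graph $D$ used in Lemma~\ref{lemma:girth-extension}.

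The heart of the proof, and the step I expect to be the main obstacle, is an uncrossing analysis of $3$-edge-cuts of $H$. Given two separable pairs realised by cuts with sides $X$ and $X'$, if these cuts cross then all four of $X\cap X'$, $X\setminus X'$, $X'\setminus X$, $V(H)\setminus(X\cup X')$ are nonempty and submodularity of the cut function of $H$ gives $|\delta_H(X\cap X')|+|\delta_H(X\cup X')|\le 6$; feeding in the identity $|\delta_G(H[W])|=|\delta_H(W)|+|W\cap A|$ together with Corollary~\ref{coro:connected}, Lemma~\ref{lemma:acyclic-pole}, girth at least $5$, and $|V(H)|\ge 11$, I expect to show this forces either some vertex of $A$ to be adjacent in $H$ to another vertex of $A$ or to lie at distance $2$ from one — contradicting the girth condition that $\bar H$ (or the $\tau$ being built) was chosen to satisfy — or else to force $|V(H)|$ too small, or to land in one of a short list of rigid degenerate configurations. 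The upshot should be that the separable pairs are tame enough — few in number, and either pairwise ``nested'' or all sharing a common vertex — that the freedom inherited from Lemma~\ref{lemma:girth-extension} still admits an admissible $\tau$, with the residual degenerate configurations dispatched one by one by an explicit alternative completion. Carrying out this bookkeeping, tracking $E(H[A])$, $E(D)$, and the separable pairs simultaneously, is where the real work lies.
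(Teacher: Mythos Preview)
Your setup through the second paragraph is fine and matches the paper: extract a $3$-edge-cut $S'$ of $H$ with $a_1,a_2$ on one side $V_1$ and $a_3,a_4,a_5$ on the other side $V_2$, observe that $H[V_1]$ is a fragment hence $2$-connected, and apply Lemma~\ref{lemma:6pole} to $H[V_2]$. One factual slip: the claim that the ends of $S'$ in $V_2$ never lie in $A$ is false; precisely when $H[V_2]$ falls into case~(ii) of Lemma~\ref{lemma:6pole}, the bridge-end is some $c_\ell=a_i$ with $i\in\{3,4,5\}$, and the remaining $2$-connected piece $D_2\subseteq H[V_2]$ is again a fragment.

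The real gap is strategic. You propose to classify all ``separable pairs'' via an uncrossing/submodularity analysis and then rerun the case split of Lemma~\ref{lemma:girth-extension} steering both outer pairs away from them. That programme is left entirely unexecuted, and it is also unnecessary. The paper bypasses it with a single direct repair: choose each outer pair to \emph{straddle} the cut $S'$, i.e.\ take one vertex from $\{a_1,a_2\}$ and one from $\{a_3,a_4,a_5\}$. Concretely, if $H[V_2]$ is $2$-connected pick $j\in\{3,4,5\}$ with $a_j$ adjacent in $H$ to neither of the other two (Lemma~\ref{lemma:degrees}) and set $H_2=H(a_1,a_i,a_j,a_2,a_k)$; in the bridge case set $H_2=H(a_1,a_j,a_i,a_k,a_2)$ with $a_i$ the bridge-end. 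Girth $\ge 5$ is immediate since every $a_1$--$a_i$ and $a_2$--$a_k$ path must use an edge of $S'$. And now Lemma~\ref{lemma:distribution} finishes the job in one line: any cycle-separating $4$-cut of $H_2$ would separate $\{a_1,a_i\}$ from $\{a_2,a_k\}$, hence would have to contain at least two edges of the $2$-connected $H[V_1]$, at least two edges of the $2$-connected $H[V_2]$ (respectively $D_2$), and one of $xy,yz$ --- at least five edges in all. No enumeration of separable pairs, no uncrossing, no rerun of the girth lemma is needed; the $2$-connectedness of the two sides already rules out every bad cut once the outer pairs cross $S'$.
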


\begin{proof}
From Lemma \ref{lemma:girth-extension} we know that the graph $H_1 = H(a_1, a_2, a_3, a_4, a_5)$ has girth $5$ for some permutation $a_1a_2a_3a_4a_5$ of the degree $2$ vertices of $H$. If $\zeta(H_1) \ge 5$, we are done, so we assume that $H_1$ contains a cycle-separating cut $S_1$ whose removal leaves components $C_1'$ and $C_2'$. According to Lemma~\ref{lemma:distribution}, $|S_1| = 4$ and without loss of generality $a_1, a_2 \in C_1'$ and $a_3, a_4, a_5 \in C_2'$. Put $C_1 = C_1' - \{x, y, z\}$ and $C_2 = C_2' - \{x, y, z\}$. Denote the three edges of $S$ contained in $H$ by $b_1c_1$, $b_2c_2$ and $b_3c_3$ in such a way that $b_i \in V(C_1)$ and $c_i \in V(C_2)$ for each $i \in \{1,2,3\}$ (see Figure \ref{fig:h1}).

\begin{figure}[h]
\centering
\includegraphics{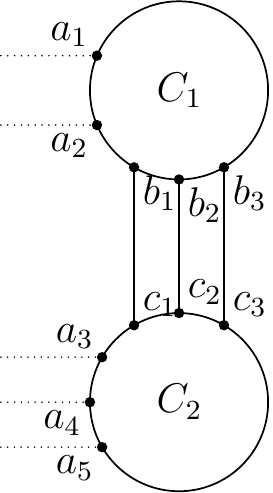}
\caption{The cyclic part $H$ when $\zeta(H_1) < 5$, the edges from $\delta_G(H)$ are dotted}
\label{fig:h1}
\end{figure}

Since the graph $H_1$ has girth at least $5$, we have $\dist_H(a_1, a_2) \ge 3$, $\dist_H(a_4, a_5) \ge 3$, and $\dist_H(a_3, a_i) \ge 2$ for each $i \in \{1, 2, 4, 5\}$. From this we can conclude that $C_1$ contains at least $4$ vertices and $C_2$ contains at least $5$ vertices. Therefore, both $C_1$ and $C_2$ contain a cycle and, moreover, $C_1$ is a fragment and hence, due to Proposition \ref{prop:fragments}, it is $2$-connected.

Suppose that $C_2$ is $2$-connected. According to Lemma \ref{lemma:degrees}, there is at most one edge between the vertices $a_3$, $a_4$, and $a_5$. Therefore, there is a permutation $ijk$ of $\{3, 4, 5\}$ such that $a_ja_i, a_ja_k \notin E(H)$ (and obviously also $a_ja_1, a_ja_2 \notin E(H)$). Moreover, every $a_1$-$a_i$-path and every $a_2$-$a_k$-path contains at least two vertices $b_i$ and $c_i$ for some $i \in \{1, 2, 3\}$. Thus the graph $H_2 = H(a_1, a_i, a_j, a_2, a_k)$ (see Figure \ref{fig:h2a}) has girth at least $5$. Now we show that $\zeta(H_2) \ge 5$. Suppose, to the contrary, that $H_2$ contains a small cycle-separating cut $S_2$. By Lemma \ref{lemma:distribution}, $|S_2| = 4$ and the cut $S_2$ separates $\{x, a_1, a_i\}$ from $\{z, a_2, a_k\}$. However, since both $C_1$ and $C_2$ are $2$-connected, the cut $S_2$ has to contain at least two edges from $C_1$, at least two edges from $C_2$ and the edge $xy$ or $yz$. Therefore $S_2$ contains at least five edges in total; a contradiction.

\begin{figure}[h]
\centering
\begin{subfigure}{0.45\textwidth}
\centering
\includegraphics{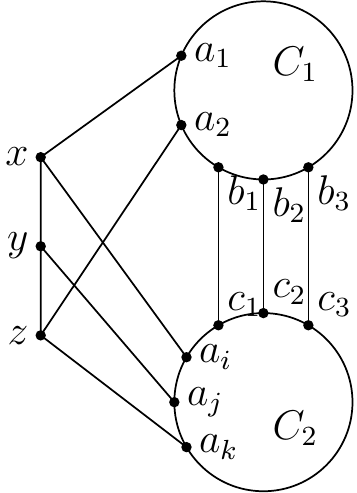}
\caption{$C_2$ is $2$-connected}
\label{fig:h2a}
\end{subfigure}
\begin{subfigure}{0.45\textwidth}
\centering
\includegraphics{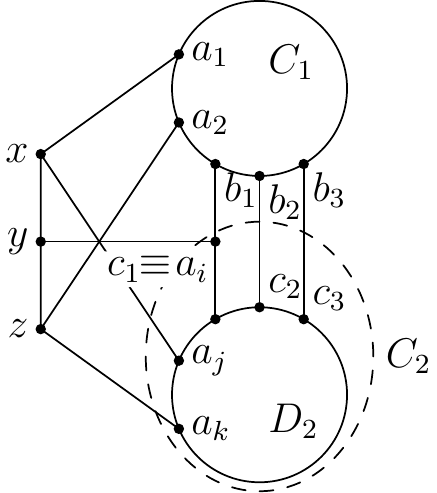}
\caption{$C_2$ contains a bridge}
\label{fig:h2b}
\end{subfigure}
\caption{Completions of the cyclic part $H$ to a cyclically $5$-connected cubic graph}
\label{fig:h2}
\end{figure}

If the cyclic component $C_2$ is not $2$-connected, then by Lemma \ref{lemma:6pole} it contains exactly one bridge such that two edges from $\delta_G(C_2)$ are incident with one end of the bridge and the other end of the bridge lies in a component $D_2$ which is a fragment. Since $a_3$, $a_4$ and $a_5$ are distinct vertices and the same goes also for $c_1$, $c_2$ and $c_3$, we can say that $c_1 \equiv a_i$ for some $i \in \{3, 4, 5\}$. We put $\{3, 4, 5\} - \{i\} = \{j, k\}$ and $H_2 = H(a_1, a_j, a_i, a_k, a_2)$ (see Figure \ref{fig:h2b}).
At first, we show that $g(H_2) \ge 5$. Since $a_i$ is adjacent to $b_1$ and $c_1$, its distance from each of the vertices $a_1$, $a_2$, $a_j$, and $a_k$ is at least $2$. Moreover, we have $\dist_H(a_1, a_j) \ge 3$, because every $a_1$-$a_j$-path contains one of the edges $b_1a_i$, $b_2c_2$ or $b_3c_3$. For the same reason we have also $\dist_H(a_2, a_k) \ge 3$. Hence $g(H_2) = 5$.
Suppose, to the contrary, that $\zeta(H_2) < 5$. According to Lemma \ref{lemma:distribution}, there is a cycle-separating $4$-cut $S_2$ which separates $\{x, a_1, a_j\}$ from $\{ z, a_2, a_k \}$. However, since $C_1$ and $D_2$ are $2$-connected, the cut $S_2$ has to contain at least two edges from $C_1$, at least two edges from $D_2$ and one of the edges $xy$ and $yz$, so in total at least five edges, which contradicts the fact that $|S_2| = 4$. Therefore $\zeta(H_2) = 5$.
\end{proof}

\section{Concluding remarks}

There is only one way how a cyclic part $H$ of a cubic graph with cyclic connectivity $5$ can be completed to a cubic graph by adding fewer than three vertices.
Namely, we can add one new vertex and connect it with three $2$-valent vertices of $H$, and add one new edge between the remaining two $2$-valent vertices.
Assume that $H$ contains three vertices $a_1$, $a_2$ and $a_3$ of degree $2$ such that
all of them have some common neighbour $v$, or there is a $6$-cycle $a_1v_1a_2v_2a_3v_3$ in $H$ (cf. \cite[Lemma 9]{Andersen88}).
Then, in every case, two of the vertices $a_1$, $a_2$ and $a_3$ are connected by an edge or are connected to the newly added vertex $v$, which yields a $3$-cycle or a $4$-cycle, respectively. Thus, the cyclic part $H$ cannot be completed to a cyclically $5$-connected cubic graph by adding only one vertex. Clearly, there are infinitely many cyclic parts satisfying one of the two aforementioned conditions. This stands in contrast to the only exception (the $5$-cycle) for completing $H$ by adding a path of length two.

The following problem remains open.

\medskip
\begin{problem*}
Determine a complete set of conditions under which a cyclic part of a cubic graph with cyclic connectivity $5$ can be completed to a cyclically $5$-connected cubic graph by adding only one additional vertex and restoring $3$-regularity.
\end{problem*}

One may wonder if the subgraph $H$ from Theorem \ref{thm:main} can be completed to a cyclically $5$-connected cubic graph $H'$ by adding a path of length two in such a way that $H'$ is a minor of $G$. As we illustrate in the following example, this is not always true.
 
The graph $G$ from Figure \ref{fig:counterexample} is cyclically $5$-connected and the edge cut $S$ separates $G$ into the component $H$ and a $5$-cycle $v_1v_2v_3v_4v_5$. Let $D$ be the graph from Lemma \ref{lemma:girth-extension}, which contains edges between vertices $a_i$ and $a_j$ if $\dist_H(a_i,a_j) = 2$. In this case, $D$ is the complete graph $K_5$ without the edges $a_1a_4$ and $a_2a_5$. Therefore, $H' = H(a_1, a_5, a_3, a_2, a_4)$ is the only graph of girth $5$ that can be obtained by adding a path of length two to $H$. The graph $H'$ is not a minor of $G$ because the vertices $v_1$ and $v_4$ that should be contracted to $x$ and the vertices $v_2$ and $v_4$ that should be contracted to $z$ lie on the $5$-cycle in an alternating order.

\begin{figure}[h]
    \centering
    \includegraphics{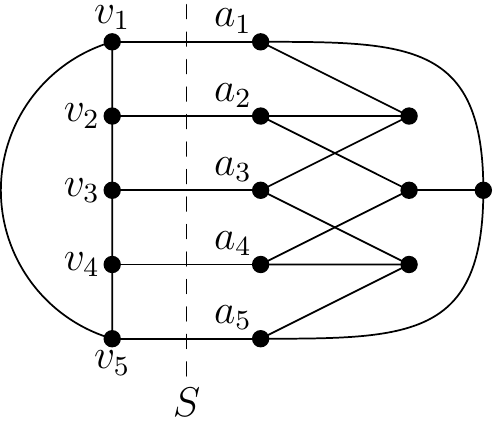}
    \caption{A graph $G$ whose component cannot be completed to a minor of~$G$}
    \label{fig:counterexample}
\end{figure}

\subsection*{Acknowledgements}

The authors acknowledge partial support from the grants VEGA 1/0813/18 and APVV-19-0308.

\end{document}